\documentclass{article} 
\usepackage[english]{babel}
\usepackage{graphicx}
\usepackage{subcaption}
\usepackage{overpic}
\usepackage{amsthm}
\newtheorem{theorem}{Theorem}


\begin{document}
\date{} 
\title{Stability of solutions of systems  of Volterra integral equations}
\author{Ilya Boykov $^{1}$, Vladimir Roudnev  $^{2}$ and Alla Boykova  $^{3}$}
\maketitle
\abstract{In this paper we  propose new sufficient conditions for stability of solutions of systems of Volterra linear  integral equations and systems of linear integro-differential Volterra equations. 
Solution stability conditions for systems of  Volterra linear   integral equations are studied with perturbed right hand sides of the equations.  
These new sufficient conditions are expressed in terms of logarithmic norms of matrices consisting of coefficients of equations and derivatives of their kernels. 
For integro-differential Volterra equations we also provide sufficient conditions for stability of solutions when the initial conditions  are perturbed.
}
\footnotetext[1]{Penza State University; boikov@pnzgu.ru; corresponding author .}
\footnotetext[2]{Saint Petersburg State University; v.rudnev@spbu.ru }
\footnotetext[3]{Penza State University; aiboikova@pnzgu.ru }

\section{Introduction}

Volterra's integro-differential equations have multiple applications,  such as the  problems of  wing movement in  non-stationary air flow
 \cite{Bel_1971, Bel_1980}, including turbulent flows, mechanics of solids, including viscoelastic properties \cite{Arut}.

The studying of the Volterra integro-differential equations originates from works of Volterra \cite{Volterra_1910, Volterra_1912}. On the base of such equations he had been studying a number of models that take into account the heredity effects \cite{Volterra_1959}, suggested a mathematical model of biological populations dynamic based on the use of integro-differential equations. The effects of heredity are particularly important in modeling immune processes in living organisms, where Volterra integro-differential equations play a major role. 

 Here we refer to Guardiola and Vecchio \cite{Guardiola} who proposed  the immune response model with distributed delay  
\begin{equation}
	\label{(1.10)} 
	\begin{array}{llll}
		P(t) = \int\limits^t_0 F(t-x) e^{-\delta(t-x)} k_i k_s V(x)S(x) dx,\\
		V(t) = V_0 e^{-ct} + \int\limits^t_0  e^{-c(t-x)}  pP(x) dx, \, t \in [0, t_f],\\
		S(t) = S_0 e^{-\beta t} + \int\limits^t_0  e^{-\beta(t-x)} [\alpha - k_s V(x) S(x)] dx,
	\end{array}
\end{equation}
where $P$ is the population of infected cells that produce viruses, $V$ is the population of viruses,
$S$ is the population of cells susceptible to the infection or target cells. The function $F$ is a continuously distributed intracellular delay, 
$$
F(x)=\int\limits_0^xf(s)ds=1-e^{-x/b}\sum\limits_{j=0}^{n-1}\frac{x^j}{j!b^j},\ \ f(x)=\frac{x^{n-1}}{(n-1)!b^n}e^{-x/b}, \ \ \ \int\limits_0^\infty xf(x)dx=nb,
$$
$\alpha>0 $ is the renewal rate of susceptible cells, 
$\beta $ is the clearance rate of susceptible cells, 
$k_s $ is the rate of infection, 
$S_0 \ge 0$, $c> 0 $ is the clearance rate of viruses, 
$p>0 $ is the virus production rate, 
$V_0 \ge 0$,  $\delta > 0 $ is the clearance rate of productive cells,
$k_i>0 $ is the rate of transition between infected and productively infected cells, 
$b\in R^+, n\in N $ are the  parameters of the Gamma probability distribution.

Guardiola and Vecchio \cite{Guardiola} have performed the analysis of the qualitative behavior of the solution of system (\ref{(1.10)}) 
and demonstrated its positivity and boundedness.

In particular stationary state of  biological population consisting of species that survive on a single limited source of food is described with a system of Volterra integro-differential equations. The model of such a population was given by  V. Volterra  \cite{Volterra_1959}.

In scientific literature there are many definitions of stability of solving systems of Volterra linear integral equations. Here is a definition the closest to that one used below.

Consider the system 
\begin{equation}
	X(t)=\int_a^tQ(t,s)X(s)ds+F(t), \  t\geq a,
	\label{1_1}
\end{equation}
where $X(t), F(t)$ is a continuous mapping $[a,\infty)$ in $R^n$; $n\times n$ is a matrix  $Q(t,s)$ satisfied the Carathéodory condition in $a\leq s\leq t$. 

{\bf Definition 1.}  \cite{Tsalyuk_1968}.  The system (\ref{1_1}) or the kernel  $Q(t,s)$ is  stable if for any $\varepsilon >0$ there exists $\delta>0$ that $\sup\limits_{t>a}\|F(t)\|<\delta$ implies
$\sup\limits_{t>a}\|X(t)\|<\delta$.
The system (\ref{1_1}) is asymptotically stable, if it is stable and  $\lim\limits_{t\rightarrow \infty}\|F(t)\|=0$ implies $\lim\limits_{t\rightarrow \infty}\|X(t)\|=0$.

Various definitions of stability solutions of systems of Volterra integral equations are given in  \cite{Tsalyuk_1977}. 

Numerous works are devoted to study of stability for  solutions of Volterra integral equations and their systems. 

Vinokurov   \cite{Vin_1959}   shown that solution of the system of equations 
\begin{equation}
	X(t)+\int_0^th(t,\tau)X(\tau)d\tau=F(t), X(t), F(t) \in R^n,
	\label{v1}
\end{equation}
is stable if and only if, 
\[
\sup\limits_{t\geq 0}\int_0^t\|R(t,s)\|ds<\infty,
\]
where $R(t,s)$ is the resolvent of the system of equations (\ref{v1}). 

In a series of papers, for instance in \cite{Vin_1973}, necessary and sufficient conditions for stability of Volterra integral equations  are expressed via system fundamental matrix. 

Tsalyuk \cite{Tsalyuk_1968} obtained the sufficient condition for stability using  modulus of eigenvalues of iterated matrices.

A detailed review of works dedicated to  solution stability for systems of Volterra integral equations and published up to 1976 was given in  \cite{Tsalyuk_1977}.

In 1979  Corduneanu and  Lakshmikantham  \cite{Corduneanu and  Lakshmikantham} published the survey devoted to problems of existence, uniqueness and stability solutions of integro-differential equations with infinite delays involving Volterra equations. In 2000 Cordunearu  \cite{Corduneanu} published his survey devoted to abstract Volterra equations.  

Stability of solutions of systems of Volterra  integral equations 
\[X(t)=\int_a^t h(t,\tau)X(\tau)d\tau+F(t)\]
has been investigated in  \cite{Bownds}.
Here $h$ is a $n\times n$ matrix which is assumed continuous for $t_0\leq \tau \leq t<\infty, a>t_0$, and $F$, $X$ are $n$-vectors which are at least continuous on $[t_0,\infty)$
assuming that  $h(t,\tau)$ is a degenerate kernel  (Pincherle-Coursat kernel).

In   \cite{Gil} the following system of equations has been studied
\begin{equation}
	X(t)-\int\limits^t_0 K(t-s) X(s) ds+ G(X, t)=F(t)\geq 0,
	\label{09}
\end{equation}
where $K(t)$ is $n\times n$ matrix function, $X$ and $F$ are  $n$-dimensional vectors. 

Recently investigations of stability  of mild solution for a class of impulsive neutral stochastic functional-differential equations in Hilbert spaces  \cite{Deng}   have been actively developed alongside  the classical (deterministic) Volterra equations and  systems of classical Volterra equations.

Stability of solutions of integro-differential Volterra  equations  has been broadly studied for the last fifty years.  
Integro-differential equations
\begin{equation}
\frac{dX(t)}{dt}=A(t)X(t)+\int_0^tC(t-s)X(s)ds, 0\leq t<\infty,   
\label{01}
\end{equation}
and
\begin{equation}
\frac{dX(t)}{dt}=A(t)X(t)+\int_0^tC(t,s)X(s)ds, 0\leq t<\infty,   
\label{02}
\end{equation}
have been investigated. Here $A$ is a  $n\times n$ continuous matrix for $t \in [0,\infty)$, and $C$ is a  $n\times n$ continuous matrix for $0\leq s \leq t \leq \infty$.

Equation (\ref{01}) is a partial case of equation (\ref{02}), and the results obtained in our work for equation (\ref{02}) are equally valid for equation (\ref{01}).
In literature, however, equations (\ref{01}) and (\ref{02}) are considered separately on the base of quite distinct methods. To study stability for the equation (\ref{01}) the  method of norm estimations for operator-valued functions  was introduced in \cite{Gil}. Stability of solutions of systems of integro-differential equations   (\ref{02}) 
was investigated by the second Lyapunov method in \cite{Burton}. 

In \cite{Andreev_2018} Lyapunov-Krasovsky functionals have been  constructed to study the system of equations 
\[\frac{dX(t)}{dt}=f_1(t,X(t))+f_2(t,X(t-\mu_1(t)))
+\int_{t-\mu_2(t)}^tq_1(t,s,X(s))ds+\int_{t_0}^tq_2(t,s,X(t),X(s))ds,\] 
\[\ t_0\leq t \leq T,\]
where $X \in R^n$, $R^n$ is the $n$-dimensional linear real space. The conditions for asymptotic stability have also been obtained.  

The system of Volterra integro-differential equations 
\[
\frac{dX(t)}{dt}=A(t)f(X(t))+\int_0^tB(t,\tau)g(x(\tau))d\tau,
\]
has been studied using the second Lyapunov method. 
Here $A(t)$ is a $n\times n$ functional matrix continuous on $[0,\infty)$, $B(t,s)$ is a $n\times n$ matrix continuous for $0\leq s\leq t<\infty$, $f$ and $g$ are $n\times 1$ vector functions defined on $(-\infty, \infty)$ \cite{Vanualailai}.

Khvorost and  Tsalyuk \cite{Khvor}  investigated the stability of solutions of the system of integro-differential equations of the type 
\begin{equation}
	\frac{dX(t)}{dt}=AX(t)+\int_0^t K(t-s)X(s)ds+F(t,X)+\int_0^tG[t,s,X(s)]ds, 
	\label{2_1}
\end{equation}
where $A$ is a constant  $n\times n$ matrix, $K \in L_1 [0,\infty)$, $F$ and $G$ are continuous for  $0\leq s\leq t<\infty$, moreover $F(t,0)\equiv 0, G(t,s,0)\equiv 0$.

The stability for solution of the system of equations (\ref{2_1}) was proved assuming, that 
\[
\sup\limits_{t}\|F(t,X)\|=o(\|X\|), \|X\| \rightarrow 0
\]
and
\[
\sup\limits_{t}\int_0^t\sup\limits_{\|X\|\leq r}\|G(t,s,X)\|ds=o(r), r\rightarrow 0, 
\]
and for similar conditions.

Andreev and  Peregudova  \cite{Andreev_2018}  investigated Volterra integral equations applications for nonlinear controller construction.
Using the second Lyapunov method they studied the  asymptotic stability of solution for  
\[
\frac{dX(t)}{dt}=F(X(t))+\int\limits_0^tG(s-t,X(t),X(s))ds.
\]

Andreev an Peregudova in \cite{Andreev_2021}  studied asymptotic stability of solution of Volterra nonlinear   integro-differential equations with the second Lyapunov method 
\[
\frac{dX(t)}{dt}=F(X(t),\int\limits_{-\infty}^t C(t,s,X(s))ds)
\]
assuming $F(t,0,0)\equiv 0, G(t,0,0)\equiv 0$.

In \cite{Sergeev_2007} the stability of systems of integro-differential equations   
$$
\frac{dX(t)}{dt}=A(t)+\int\limits_0^t K(t,s)X(s) ds+F(X,Y,Z,t)+\mu \Phi(\mu,X,Y,Z,t),
$$
has been investigated. Here $X,Y,Z\in R^n, X=col(x_1,...,x_n)$, $Y=col(y_1,...,y_n)$, $Z=col(z_1,...,z_n)$, $Z$ is analytical function defined by Frechet series , $\mu$ is a small parameter,    $\mu\Phi$ is a perturbation.

Alongside with stability,  Sergeev \cite{Sergeev_2007} studied instability of systems like
$$
\frac{dX(t)}{dt}=A(t)+\int\limits_{t_0}^t K(t,s)X(s) ds+F(X,Y,Z,t)
$$
in critical cases. 

Stability of  Volterra  integro-differential equations with  differential operators of the fourth order is investigated in \cite{Iskan_2002}, \cite{Iskan_2020}.
In particular, they considered the equation 
\[
\frac{d^4 x(t)}{dt^4}+a_3(t)\frac{d^3 x(t)}{dt^3}+a_2(t)\frac{d^2x(t)}{dt^2}
+a_1(t)\frac{dx(t)}{dt}+
\]
\[
+a_0(t) x(t)+\int\limits_0^t \sum\limits^3_{l=0} h_l(\tau,\tau) x^{(l)}(\tau) d\tau=f(t), t>0.
\]

To investigate stability and asymptotic stability of solution of system of integro-differential Volterra equations 
\begin{equation}
\frac{dX(t)}{dt}=G(t,X(t))+\int\limits_0^tH(t,\tau,X(\tau))d\tau,\ \ \ X(0)=X_0
\label{vv1}
\end{equation}
Crisci et al.  \cite{Crisci} employed 
  a method that combines the direct Lyapunov method with the spectral properties of the Jacobian of  the vector function $G(t,X)$.  This method had been further developed in \cite{Vanualailai}, \cite{Tunc}, \cite{Tunc_2022} and applied to more complex equations. Moreover,  the authors in \cite{Tunc}, \cite{Tunc_2022} studied systems of integro-differential Volterra equations with delays and with fractional derivatives.


In this paper we provide an operator method to investigate stability of solutions 
of systems of linear integral and integro-differential Volterra equations. 
We obtained conditions for stability  of solutions  in terms of  logarithmic norms of matrices 
that consist -- for systems of Volterra integro-differential equations -- of equation coefficients and  integral operator kernels. 
In case of systems of Volterra integral  equations  the stability conditions are expressed in terms of equation coefficients  and derivatives of integral operator kernels.

Note the difference in formulations of the stability problems for solving the systems of Volterra integral equations and the systems of Volterra integro-differential equations. 
In the first case one gives right hand side perturbations, in the second one we have perturbations of the initial values. 

The motivation for conducting the study is the following.
We believe that to find sufficient conditions for stability of the systems expressed through coefficients and kernels of equations is an important problem. Similar conditions for stability 
are known for systems of differential equations \cite{Boy_1990}, \cite{Boy_2008},  \cite{Boy_2018}. It seems to be interesting to extend the results to systems of Volterra integral and integro-differential equations. 

The paper consists of five parts. In Introduction we gave a brief review of works devoted to issues of stability for  systems 
of integral and integro-differential Volterra equations.

 We also  recall the definition of the logarithmic norm.
In the second section we
study stability for solutions of systems of linear integral Volterra equations. 
The third  section is devoted to investigation of stability for solutions of systems of integro-differential Volterra equations. In Conclusions we summarize the results of our work.

\subsection{Notation}

Recall the logarithmic norm definition  \cite{Dal}. 

The following notation will be used:
\[B(a,r) = \{z \in B:\|z-a\| \leq r\}\ ,\]
\[ S(a,r) = \{z \in B:\|z-a\|=r\}\ ,\]
\[Re (K) = \Re (K) = (K+K^*)/2\ ,\] 
\[\Lambda(K) = \lim\limits_{h \downarrow 0}(\|I+h K\|-1)h^{-1}\ .\]
Here $B$ is a Banach space, $a \in B$, $K$ is a linear and bounded operator on $B$, $\Lambda(K)$ is the logarithmic norm \cite{Dal} of the operator $K$, $K^*$ is the conjugate operator to $K$, and $I$ stands for the identity operator. 

The logarithmic norms of linear operators are known for the most frequently used spaces.

Let $A = \{a_{ij}\}$, $i,j = 1,2,\ldots,n$, be a real matrix. 
In the $n$-dimensional space $R^n$ of vectors $X=(x_1,\ldots,x_n)$
the following norms are often used: 
 \[\|X\|_1 = \sum\limits^n_{i=1}|x_i|;\]
 \[\|X\|_2 = \max\limits_{1 \leq i \leq n} |x_i|\ ;\]
 \[\|X\|_3 = (\sum\limits^n_{i=1}x_i^2)^{1/2}\ .\]
In addition, it is known that a linear combination with positive coefficients of any finite number of norms is a norm as well.

Here are some expressions of the logarithmic norm of a matrix $ A = (a_{ij})$, due to the above norms of the vectors:\\
logarithmic norm $\Lambda_1$
\[
\Lambda_1(A) = \max\limits_{1 \leq j \leq n}\left(a_{jj} + \sum\limits_{i \ne j}|a_{ij}|  \right);
\]
 logarithmic norm $\Lambda_2$
\[
\Lambda_2(A) = \max\limits_{1 \leq i \leq n}\left(a_{ii} + \sum\limits_{j \ne i}|a_{ij}|  \right);  
\]
 logarithmic norm  $\Lambda_3$
\[
\Lambda_3(A) = \lambda_{\max} \left(\frac{A+A^*}{2}\right),
\]
where $A^*$  is the conjugate matrix for $A$.

Main properties of the logarithmic norm are given in \cite{Dal}.

\section{Stability of solutions of systems  of Volterra linear equations}

Consider a system of Volterra integral equations
\begin{equation}
x_k(t)=\sum\limits^n_{l=1}\int\limits^t_0 b_{kl}(t,\tau) x_l(\tau)d\tau, \   k=1,2,...,n, \ \  0\le t< \infty \ ,
\label{1}
\end{equation}
where $x_k(t)$ are the components of the vector-valued solution, and $b_{kl}(t,\tau)$ are the elements of the integral matrix kernel.

Assuming that the functions $b_{kl}(t,\tau), k,l=1,2,...,n$ are differentiable with respect to the first variable, the system of equations  (\ref{1}) can be written in equivalent form 
\begin{equation}
\frac{dx_k(t)}{dt}=\sum\limits^n_{l=1} b_{kl} (t, t) x_l(t)+
\sum\limits^n_{l=1} \int\limits^t_0 \left(\frac{d}{dt} b_{kl} (t,\tau)\right) x_l(\tau) d\tau, \ k,l=1,2,...,n,
\label{2}
\end{equation}
\begin{equation}
x_k(0)=0, \ k=1,2,...,n.
\label{3}
\end{equation}

We associate the system of equations (\ref{1}) with the perturbed system  
\begin{equation}
x_k(t)=\sum\limits^n_{l=1}\int\limits^t_0 b_{kl}(t,\tau) x_l(\tau)d\tau+f_k(t),
\label{4}
\end{equation}
where the function  $F(t)=(f_1(t),...,f_n(t))$ is a continuous differentiable vector function.

In turn, we associate the equation  (\ref{4}) with the Cauchy problem 
\begin{equation}
\begin{array}{lll}	
\frac{dx_k(t)}{dt}=\sum\limits^n_{l=1} b_{kl} (t, t) x_l(t)+
\\
+\sum\limits^n_{l=1} \int\limits^t_0 \left(\frac{d}{dt} b_{kl} (t,\tau)\right) x_l(\tau) d\tau+f'_k(t), k=1,2,...,n.
\end{array}
\label{5}
\end{equation}
\begin{equation}
x_k(0)=0, k=1,2,...,n.
\label{6}
\end{equation}
Here  $f_k(0)=0$, $k=1,2,...,n$.

If $f_k(0)\neq 0$ for some  $k, k=1,2,...,n, $ we arrive at  the Cauchy problem 
\begin{equation}
\begin{array}{lll}
\frac{dx_k(t)}{dt}=\sum\limits^n_{l=1} b_{kl} (t, t) x_l(t)+
\\
+\sum\limits^n_{l=1} \int\limits^t_0 \left(\frac{d}{dt} b_{kl} (t,\tau)\right) x_l(\tau) d\tau+f'_k(t), k=1,2,...,n.
\end{array}
\label{7}
\end{equation}

\begin{equation}
x_k(0)=f_k(0), k=0,1,...,n.
\label{8}
\end{equation}

Let  $X$  stand for a space of vector-functions  $X(t)=(x_1(t),...,x_n(t))$ with the norm  $\|X(t)\|=\max\limits_{1\leq k \leq n} |x_k(t)|$.

{\bf Definition 2}.  A trivial solution of the system of Volterra  integral equations  (\ref{1}) is stable
with respect to perturbations of $f_k(t), f_k(0)=0, k=1,2,\ldots,n$
 if for any arbitrary small  $\varepsilon(\varepsilon >0)$ there exists such 
$\delta(\varepsilon)$, $\delta(\varepsilon)>0$ that    $\sup\limits_{t\in[0,\infty]} \|F'(t)\|\leq \delta(\varepsilon)$   implies  $\sup\limits_{t\in[0,\infty]} \|X(t)\|\leq \varepsilon$.
Here $ X(t)$ stands for the solutions of the system  (\ref{4}), $F(t)=(f_1(t),\ldots,f_n(t))$.

The stability of solution is defined  differently when the condition $ f_k(0)=0, k=1,2,\ldots,n$ is not imposed on perturbation of  right hand sides of Volterra integral equations. In this case the original system of Volterra integral equations  (\ref{1}) is transformed to the Cauchy problem  (\ref {7}), (\ref {8}).

{\bf Definition 3}.  A trivial solution of the system of Volterra  integral equations  (\ref{1}) is stable
if for any arbitrary small  $\varepsilon(\varepsilon >0)$ there exists such 
$\delta_i(\varepsilon)$, $\delta_i(\varepsilon)>0, i=1,2$, $\delta_2(\varepsilon)\le \varepsilon$, that   $\sup\limits_{t\in[0,\infty]} \|F'(t)\|\leq \delta_1(\varepsilon)$, $\|X(0)\|=\|F(0)\|\le \delta_2(\varepsilon)$ implies $\sup\limits_{t\in[0,\infty]} \|X(t)\|\leq \varepsilon$.

Another definition of solution stability of system of Volterra integral equations is given in 
  \cite{Gil}. Consider the system of equations  (\ref{09}).
Let the following condition is fulfilled for any $X(t)\in L_2([0,\infty)$
\begin{equation}
	\|G(X(t),t)\|_{L_2}\leq q\|X(t)\|_{L_2},
	\label{V1}
\end{equation}
where $q$ is a positive constant.
The definition for stability of solutions for systems of Volterra equations in the metric of 
$L_2(0,\infty)$ has been provided in \cite{Gil}. 

{\bf Definition 4}.  The equation  (\ref{09}) is called stable in the class of functions $G$ satisfying (\ref{V1})  if there exists a constant $\eta(q)$ independent from  
a particular form of $G$ ( but depends on  $q$) such that $\|X\|_{L_2}\leq\eta(q)\|F\|_{L_2}$ for each solution of the equation   (\ref{09}). 

Let us investigate stability of a trivial solution of the system of equations (\ref{1})  assuming that the perturbation satisfies $f_k(0)=0$, $k=1,\ldots,n$.

The equation  (\ref{1})  could be rewritten as the Cauchy problem  (\ref{5}), (\ref{6}).

We need to show that for any arbitrary $\varepsilon>0$  there is such $\varepsilon_0$ that if   $\sup_{t\in [0,\infty)}|f_k{}'(t)|\le \varepsilon_0, k=1,2,\ldots,n$ then
$|x_k(t)|\le \varepsilon, k=1,2,\ldots,n$, for $t\in [0,\infty)$. 

In the matrix form the system  (\ref{5}) reads
\begin{equation}
\frac{dX(t)}{dt}=B(t)X(t)+\int\limits_0^tC(t,\tau)X(\tau)d(\tau)+F'(t),
\label{10}
\end{equation}
where $B(t)=\{b_{kl}(t,t)\},  k,l=1,2,\ldots,n$; $C(t,\tau) =\{c_{kl}(t,\tau)\}, c_{kl}(t,\tau)=\frac{d}{dt}b_{kl}(t,\tau),  k,l=1,2,\ldots,n$; $F'(t)=(f_1{}'(t)),\ldots,f_n{}'(t))$.

We will find the conditions for $\varepsilon_0$  such that  the inequality is fulfilled  
\begin{equation}
\sup_{t\in [0,\infty)}\|X(t)\|\le \varepsilon.
\label{11}
\end{equation}

\begin{theorem}
Let	 $\varepsilon(\varepsilon>0)$ be a sufficiently small positive number.
Let $\sup_{[0,\infty)}\|F'(t)\|\le \varepsilon_0\le \varepsilon$. Let for any $t, t' \in [0,\infty), t>t'$ the inequality be fulfilled 
\begin{equation}
\Lambda(B(t))+\frac{1}{t-t'}\int_{t'}^t\psi^*(\tau)d\tau<0,
\label{12}
\end{equation}	
where 
\[\psi^*(s)=\int_0^s \|C(s,\tau) \|d\tau +\frac{\| F'(s) \|}{\varepsilon}.
\] 
Then the trivial solution of the system  (\ref{1}) is stable and $\| X(t)\| \le \varepsilon, t \in [0,\infty )$. 
\label{Theorem1}
\end{theorem}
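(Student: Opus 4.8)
The plan is to work with the equivalent Cauchy problem in matrix form (\ref{10}) and to control the scalar quantity $u(t)=\|X(t)\|$ through a differential inequality driven by the logarithmic norm of $B(t)$. First I would invoke the basic property of the logarithmic norm recorded in the Notation (following \cite{Dal}): for a linear system $\dot Y=A(t)Y+g(t)$ the upper right Dini derivative of the norm of any solution obeys $D^{+}\|Y(t)\|\le \Lambda(A(t))\|Y(t)\|+\|g(t)\|$. Applying this to (\ref{10}) with $A=B$ and forcing term $g(t)=\int_0^t C(t,\tau)X(\tau)d\tau+F'(t)$, and estimating the Volterra part by $\|\int_0^t C(t,\tau)X(\tau)d\tau\|\le \int_0^t\|C(t,\tau)\|\,\|X(\tau)\|d\tau$, yields
\[
D^{+}u(t)\le \Lambda(B(t))\,u(t)+\int_0^t\|C(t,\tau)\|\,u(\tau)\,d\tau+\|F'(t)\|.
\]
This reduces the integro-differential system to a single scalar Volterra integro-differential inequality for $u$, which is the object I would analyze.

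Next I would run a first-exit (continuity, or bootstrap) argument. Since $X(0)=0$ we have $u(0)=0<\varepsilon$, so if the conclusion failed there would be a first time $t^{*}>0$ with $u(t^{*})=\varepsilon$ and $u(\tau)\le\varepsilon$ for all $\tau\in[0,t^{*}]$. On this interval I substitute $u(\tau)\le\varepsilon$ into the integral term and write $\|F'(t)\|=\varepsilon\cdot\frac{\|F'(t)\|}{\varepsilon}$, which collapses the right-hand side into the function $\psi^{*}$ of the statement:
\[
D^{+}u(t)\le \Lambda(B(t))\,u(t)+\varepsilon\Big(\int_0^t\|C(t,\tau)\|d\tau+\frac{\|F'(t)\|}{\varepsilon}\Big)=\Lambda(B(t))\,u(t)+\varepsilon\,\psi^{*}(t).
\]
Multiplying by the integrating factor $\exp\big(-\int_0^t\Lambda(B(r))dr\big)$ and integrating from $0$ to $t^{*}$ (using $u(0)=0$) gives the Gronwall-type bound
\[
u(t^{*})\le \varepsilon\int_0^{t^{*}}\exp\Big(\int_s^{t^{*}}\Lambda(B(r))dr\Big)\psi^{*}(s)\,ds.
\]
The goal is then to show that the weighted integral on the right is strictly less than $1$, which forces $u(t^{*})<\varepsilon$, contradicts $u(t^{*})=\varepsilon$, and thereby establishes $u(t)\le\varepsilon$ for all $t$ and hence stability in the sense of Definition 2 with $\delta(\varepsilon)=\varepsilon_0$.

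The main obstacle is precisely this last step: extracting the strict bound on the weighted integral from hypothesis (\ref{12}). Note that the purely pointwise route — letting $t'\uparrow t$ in (\ref{12}) to obtain $\Lambda(B(t))+\psi^{*}(t)\le 0$ and evaluating $D^{+}u$ at $t^{*}$ — only yields $D^{+}u(t^{*})\le 0$, not the strict inequality needed for a contradiction, so the averaged form of (\ref{12}) over a nondegenerate interval $[t',t]$ is genuinely essential. The delicate point is reconciling the factor $\exp(\int_s^{t^{*}}\Lambda(B(r))dr)$, in which $\Lambda(B)$ is integrated over the whole subinterval, with condition (\ref{12}), where $\Lambda(B(t))$ is taken at the endpoint against the averaged $\psi^{*}$. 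I would handle this by rewriting (\ref{12}) as $\int_{t'}^{t}\psi^{*}(\tau)d\tau<-(t-t')\Lambda(B(t))$ for all $t'<t$ and feeding these inequalities into the integrated estimate, for instance via integration by parts in $s$ or via a discretization and monotonicity argument on the running maximum of $u$, so as to bound the weighted integral by $1$. Verifying that this closes under (\ref{12}) alone, rather than under an auxiliary monotonicity assumption on $\Lambda(B(\cdot))$, is the crux and is where I would concentrate the technical effort.
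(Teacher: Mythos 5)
Your reduction to the scalar inequality $D^{+}u(t)\le \Lambda(B(t))u(t)+\int_0^t\|C(t,\tau)\|u(\tau)\,d\tau+\|F'(t)\|$ and the first-exit setup are sound, but the proof is not closed: the final step, showing that hypothesis (\ref{12}) forces $\int_0^{t^*}\exp\bigl(\int_s^{t^*}\Lambda(B(r))dr\bigr)\psi^{*}(s)\,ds<1$, is exactly where you stop, and you correctly identify that (\ref{12}) — which pairs $\Lambda(B(t))$ \emph{at the right endpoint} with the \emph{average} of $\psi^{*}$ — does not line up with the weight $\exp\bigl(\int_s^{t^*}\Lambda(B(r))dr\bigr)$, in which $\Lambda(B)$ is integrated over the subinterval. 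As you note, the pointwise limit $t'\uparrow t$ only yields $D^{+}u(t^{*})\le 0$, which is not a contradiction. So as written this is a genuine gap, not a routine verification.

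The idea that closes the argument in the paper, and which is missing from your proposal, is to work on the \emph{wrong side} of the exit time: instead of estimating $u$ on $[0,t^{*}]$ where $u\le\varepsilon$ (which produces the additive forcing term $\varepsilon\psi^{*}$ you cannot absorb), the paper considers an interval $[t^{*},t^{**}]$ on which $\|X\|$ is increasing and $\|X(s)\|\ge\|X(t^{*})\|=\varepsilon$. There the inhomogeneity is absorbed \emph{multiplicatively}, $\|F'(s)\|\le\frac{\|F'(s)\|}{\varepsilon}\|X(s)\|$, and likewise $\int_0^s\|C(s,\tau)\|\,\|X(\tau)\|\,d\tau\le\|X(s)\|\int_0^s\|C(s,\tau)\|\,d\tau$ since $\|X(\tau)\|\le\|X(s)\|$ for all $\tau\le s$ on that configuration. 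This makes the inequality homogeneous in $\|X\|$, so Gronwall--Bellman gives the purely multiplicative bound $\|X(t)\|\le \exp\{\Lambda(B(t^{*}))(t-t^{*})+\int_{t^{*}}^{t}\psi(\tau)d\tau\}\|X(t^{*})\|$, whose exponent is negative by (\ref{12}) — yielding $\|X(t)\|<\|X(t^{*})\|$ and contradicting the assumed growth. The paper also sidesteps your endpoint-versus-integral mismatch by freezing the coefficient matrix at $t^{*}$ (variation of constants around $B(t^{*})$, with the remainder $\|B(s)-B(t^{*})\|$ controlled by continuity on the short interval $[t^{*},t^{**}]$), so that only the single number $\Lambda(B(t^{*}))$ ever appears in the exponential. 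Without the multiplicative absorption of $\|F'\|$ your route cannot use the $\|F'(s)\|/\varepsilon$ term in $\psi^{*}$ for its intended purpose, and the bound you aim for does not follow from (\ref{12}) alone.
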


\begin{proof}[Proof of Theorem 1]
We will prove Theorem \ref{Theorem1} by contradiction. Suppose that at the moment  $t^*$ the condition $\|X(t)\|\le \varepsilon, t \in [0,\infty)$  is violated. 

The solution of the Cauchy problem  (\ref{10}), (\ref{6}) for $t\ge t^*$ can be presented in the form \cite{Boy_2008,Dal}
\begin{equation}
\begin{array}{lll}
X(t)= \exp\{B(t^*)(t-t^*)\}X(t^*)+\int\limits_{t^*}^t\exp\{B(t^*)(t-s)\}(B(s)-B(t^*))X(s)ds+\\
+\int\limits_{t^*}^t\exp\{B(t^*)(t-s)\}(\int\limits_0^s C(s,\tau)X(\tau)d\tau)ds+  \int\limits_{t^*}^t\exp\{B(t^*)(t-s)\}F'(s)ds.
\end{array}
\label{13}
\end{equation}

Proceeding to norms in  (\ref{13}), we have
\begin{equation}
\begin{array}{lll}
\|X(t)\|\le\exp\{\Lambda{(B(t^*))}(t-t^*)\}\|X(t^*)\|+\\
\int\limits_{t^*}^t \exp\{\Lambda{(B(t^*))}(t-s)\} 
\|B(s)-B(t^*)\|\|X(s)\|ds+ \\
\int\limits_{t^*}^t\exp\{\Lambda{(B(t^*))}(t-s)\}(\int\limits_0^s\|C(s,\tau)X(\tau)\|d\tau)ds+ \\ \int\limits_{t^*}^t\exp\{\Lambda{(B(t^*))}(t-s)\}\|F'(s)\|ds.
\end{array}
\label{14}
\end{equation}

Let $[t^*, t^{**}]$  stands for an interval where the function $\|X(t)\|$ increases. The interval exists since $\|X(t^{*})\|= \varepsilon$ and for $t:\ t^{*}<t<t^{**}$ the function $\|X(t)\|$ increases. 

Then the inequality 
\[\int\limits_0^s\|C(s,\tau)\|\|X(\tau)\|d\tau \le \|X(s)\|\int\limits_0^s\|C(s,\tau)\|d\tau, t^*\le s\le t^{**}
\]
is satisfied.

Let us exploit the inequality (\ref{14}):
\begin{equation}
\begin{array}{lll}
\|X(t)\|\leq \exp\{\Lambda{(B(t^*))}(t-t^*)\}\|X(t^*)\|+\\
\int\limits_{t^*}^t\exp\{\Lambda{(B(t^*))}(t-s)\}\|B(s)-B(t^*)\|\|X(s)\|ds+ \\
\int\limits_{t^*}^t\exp\{\Lambda{(B(t^*))}(t-s)\}\left(\int\limits_0^s\|C(s,\tau)\|\|X(s)\|d\tau\right)ds+ \\ +\int\limits_{t^*}^t\exp\{\Lambda{(B(t^*))}(t-s)\}\frac{\|F'(s)\|}{\varepsilon}\|X(s)\|ds.
\end{array}
\label{15}
\end{equation}

By introducing a function $\varphi(t)=\exp\{-\Lambda{(B(t^*))}t\}\|X(t)\|$
we rewrite the inequality  (\ref{15})  in the form 
\begin{equation}
\begin{array}{rcl}
\varphi(t)&\leq&\varphi(t^*)+\varepsilon_1\int\limits_{t^*}^t\varphi(s)ds+\int\limits_{t^*}^t(\int\limits_{0}^s\|C(s,\tau)\|d\tau)\varphi(s)ds+\int\limits_{t^*}^t
\frac{\|F'(s)\|}{\varepsilon}\varphi(s)ds=\\
&=&\varphi(t^*)+\int_{t^*}^t\left[\varepsilon_1+\left(\int_0^s\|C(s,\tau)\|d\tau\right)+\frac{\|F'(s)\|}{\varepsilon}\right]\varphi(s)ds=\\
&=&\varphi(t^*)+\int_{t^*}^t\psi(s)\varphi(s)ds,
\end{array}
\label{16}
\end{equation}
where $\psi(s)=\varepsilon_1+\left(\int_0^s\|C(s,\tau)\|d\tau\right)+\frac{\|F'(s)\|}{\varepsilon}, \varepsilon_1=\max_{s\in [t^*, t^{**}]} \|B(s)-B(t^*)\|$.

From the last inequality and the  Gronwall-Bellman inequality we have 
\[
 \varphi(t)\leq \varphi(t^*)\exp\left\{\int_{t^*}^t\psi(\tau)d\tau\right\}.
\]

Then
$$\|X(t)\|\leq \exp\left\{\int_{t^*}^t\psi(\tau)d\tau\right\}e^{\Lambda(B(t^*))(t-t^*)}\|X(t^*)\|\ . $$

Therefore, if for  $t \in (t^*,t^{**}]$
$$\Lambda(B(t^*))(t-t^*)+\int_{t^*}^t\psi(\tau)d\tau<0\ , $$
then $\|X(t)\|<\|X(t^*)\|$. 
Therefore the obtained  contradiction yields the stability for a trivial solution of the system of equations  (\ref{1}).
Theorem is proved. 
\end{proof}

Now we study the case when $f_k(0), k=1,2,...,n$ is not necessarily equals to zero. 
It is essential  to  use Definition 3. 

\begin{theorem}
Let for any $t,t^*\in [0,\infty), t^*\geq 0, t>t^*$ inequalities be fullfilled\\
1) $\Lambda(B(t))+\frac{1}{t-t^*}\int\limits_{t^*}^t \varphi^*(\tau) d\tau<0$,
where
$\varphi^*(s)=\int\limits_0^s\|C(s,\tau)\|d\tau+\frac{\|F'(s)\|}{\varepsilon}\le \int\limits_0^s\|C(s,\tau)\|d\tau+\frac{\delta_1(\varepsilon)}{\varepsilon}$,\\
2) $\delta_2(\varepsilon)\leq \varepsilon$,\\
and $\varepsilon(\varepsilon>0)$ is sufficiently small positive number. Then the trivial solution of the system (\ref{1}) is stable and $\|X(t)\|\leq \varepsilon, t\in [0,\infty)$.
\label{Theorem21}
\end{theorem}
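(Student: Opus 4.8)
The plan is to reduce Theorem~\ref{Theorem21} to the Cauchy problem (\ref{7}), (\ref{8}) and then to repeat, almost verbatim, the contradiction argument used for Theorem~\ref{Theorem1}, the only genuinely new ingredient being the treatment of the nonzero initial data through hypothesis~2. First I would rewrite the perturbed system in the matrix form (\ref{10}), now with initial value $X(0)=F(0)$ rather than $X(0)=0$. By hypothesis~2 together with Definition~3 we have $\|X(0)\|=\|F(0)\|\le\delta_2(\varepsilon)\le\varepsilon$, so the trajectory starts inside the closed ball of radius $\varepsilon$. This is exactly where Theorem~\ref{Theorem21} departs from Theorem~\ref{Theorem1}: there the initial value vanished automatically, whereas here the inequality $\delta_2(\varepsilon)\le\varepsilon$ is imposed precisely to keep the starting point within the target ball.

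Next I would argue by contradiction. Suppose the bound $\|X(t)\|\le\varepsilon$ fails somewhere on $[0,\infty)$. Since $\|X(\cdot)\|$ is continuous and $\|X(0)\|\le\varepsilon$, there is a first instant $t^*\ge0$ at which $\|X(t^*)\|=\varepsilon$ and on some interval $[t^*,t^{**}]$ to its right the function $\|X(t)\|$ is nondecreasing and exceeds $\varepsilon$. On this interval I would freeze the coefficient matrix at $t^*$ and use the variation-of-constants representation (\ref{13}). Passing to norms and invoking the logarithmic-norm estimate $\|\exp\{B(t^*)(t-s)\}\|\le\exp\{\Lambda(B(t^*))(t-s)\}$ yields (\ref{14}); on $[t^*,t^{**}]$ the monotonicity of $\|X\|$ then lets me replace $\|X(\tau)\|$ by $\|X(s)\|$ inside the inner integral of the kernel term, exactly as in the passage leading to (\ref{15}).

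I would then introduce $\varphi(t)=\exp\{-\Lambda(B(t^*))t\}\|X(t)\|$ and obtain the linear integral inequality (\ref{16}) with kernel $\psi(s)=\varepsilon_1+\int_0^s\|C(s,\tau)\|d\tau+\|F'(s)\|/\varepsilon$, where $\varepsilon_1=\max_{s\in[t^*,t^{**}]}\|B(s)-B(t^*)\|$. Applying the Gronwall--Bellman inequality gives
\[
\|X(t)\|\le\exp\!\left\{\int_{t^*}^t\psi(\tau)d\tau\right\}e^{\Lambda(B(t^*))(t-t^*)}\|X(t^*)\|.
\]
It then remains to convert hypothesis~1 into the strict inequality $\Lambda(B(t^*))(t-t^*)+\int_{t^*}^t\psi(\tau)d\tau<0$. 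Since $\|F'(s)\|\le\delta_1(\varepsilon)$ controls the last term of $\psi$ by $\delta_1(\varepsilon)/\varepsilon$, hypothesis~1 bounds the $\varphi^*$-part of the exponent; the remaining discrepancy between $\psi$ and $\varphi^*$ is the term $\varepsilon_1$, and between $\Lambda(B(t))$ and $\Lambda(B(t^*))$ is a continuity gap. The main obstacle is precisely this limiting step: by continuity of $B$, both $\varepsilon_1$ and $|\Lambda(B(t))-\Lambda(B(t^*))|$ tend to zero as $t^{**}\downarrow t^*$, so for $t$ close enough to $t^*$ hypothesis~1 forces the bracketed quantity to be negative, whence $\|X(t)\|<\|X(t^*)\|=\varepsilon$. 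This contradicts the choice of $t^*$ as a point beyond which $\|X\|$ exceeds $\varepsilon$, and the contradiction establishes $\|X(t)\|\le\varepsilon$ for all $t\in[0,\infty)$, that is, the stability asserted in the sense of Definition~3.
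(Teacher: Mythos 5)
Your proposal is correct and follows essentially the same route as the paper: reduce to the Cauchy problem (\ref{7})--(\ref{8}), use hypothesis~2 to place $X(0)$ inside the ball of radius $\varepsilon$, and then rerun the variation-of-constants, logarithmic-norm and Gronwall--Bellman contradiction argument of Theorem~\ref{Theorem1}, with the exit time $t^*$ now allowed to equal $0$. The paper treats this new case by writing the representation (\ref{v24}) anchored at $t=0$ and invoking the Theorem~\ref{Theorem1} argument verbatim, which is exactly what your uniform treatment of $t^*\ge 0$ accomplishes.
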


\begin{proof}[Proof of Theorem 2]
The analysis  of stability is performed  similar to that one for $f_k(0)=0, k=1,2,...$. The difference is that we have to study the violation of  stability in the vicinity of the point $t=0$.

Let us consider this case. We show that when the conditions of the theorem are fulfilled,  
$\|X(t)\|\leq\varepsilon$ for $t\in[0,\infty)$. 

We will prove the assertion by contradiction. 
Let $\|X(0)\|=\varepsilon$, $\|X(t)\|>\varepsilon$ for $t>0$.

For $t>0$, the Cauchy problem  (\ref{7}), (\ref{8})  solution  is
\begin{equation}
	\begin{array}{lll}
		X(t)=\exp\{B(0)t\}X(0)+\int\limits_0^t \exp\{B(0) (t-s)\}(B(s)-B(0))X(s) ds+\\
		+\int\limits_0^t \exp\{B(0) (t-s)\} (\int\limits_0^s C(s,\tau)X(\tau) d\tau)ds+\int\limits_0^t \exp\{B(0) (t-s)\}F'(s) ds.
	\end{array}
	\label{v24}
\end{equation}

Repeating the foregoing argument for Theorem \ref{Theorem1}  we derived a contradiction. Therefore it follows  $\|X(t)\|\leq \varepsilon$ for $t\in[0,\infty)$.
\end{proof}
\section{Stability of solutions of systems of Volterra integro-differential equations}

Consider a system of Volterra integro-differential equations
\begin{equation}
\begin{array}{rcl}
\frac{dx_k(t)}{dt} & = & \sum\limits^n_{l=0} a_{kl} (t) x_l(t)+\sum\limits^n_{l=0} 
                      \int\limits^t_0  b_{kl} (t,\tau) x_l(\tau) d\tau, \\
 & & t\in [0,\infty), \ \ k,l=1,2,...,n, \\
 x_k(0)& = & x_k, \\ 
 & & k=1,2,...,n.   \\
\end{array}
\label{17}
\end{equation}

By analogy with the definition of stability of solutions  of ordinary differential equations systems
and following the work \cite{Crisci}, we provide the definition of stability of solutions of Volterra integro-differential equations systems.

{\bf Definition 5.} A trivial solution of Volterra integro-differential equations system (\ref{17}) is stable if for any small $\varepsilon>0$ there is $\delta(\varepsilon)$ so that the inequality
$\|X(t)\|\le \varepsilon$ follows from $\|X(0)\|\le \delta(\varepsilon)$. Here
$X(t)=(x_1(t), x_2(t),\ldots,x_n(t))$, $X(0)=(x_1(0), x_2(t),\ldots,x_n(t))$.

We  seek conditions for stability of a trivial solution of the system of equations (\ref{17}) assuming that:\\
1) for any arbitrary values  $x_k, k=1,2,...,n$ the Cauchy problem  (\ref{17}) has a unique solution;\\
2) the functions  $a_{kl}(t), b_{kl}(t,\tau), k=1,2,...,n$ are continuous. 

Write the Cauchy problem  (\ref{17}) in the matrix form
\begin{equation}
\frac{dX(t)}{dt}=A(t)X(t)+\int\limits_0^tB(t,\tau)X(\tau)d\tau,
\label{19}
\end{equation}
\begin{equation}
X(0)=X_0,
\label{20}
\end{equation}
where $A(t)=\{a_{kl}(t)\},  k,l=1,2,\ldots,n$; $B(t,\tau)= \{b_{kl}(t,\tau)\},  k,l=1,2,\ldots,n$;  \\ $X(t)=(x_1(t),\ldots,x_n(t)), x(0)=(x_1(0),\ldots,x_n(t))$.

Let us investigate stability of the trivial solution of the equation  (\ref{19}) with perturbed initial value.  
We will show that when for any $t_1, t_2>0\ $, $t_1<t_2$ the following condition is satisfied 
\begin{equation}
\Lambda(A(t))+\frac{1}{t_2-t_1}\int_{t_1}^{t_2}\int_{0}^{s}\|B(s,\tau)\|d\tau ds <0, \ t\in[0,\infty),
\label{21}
\end{equation}
the trivial solution of the system of equations (\ref{19}) is asymptotically stable.  

Show that when  (\ref{21}) is satisfied,  from   condition $\|x(0)\|\leq\delta, x(0)=x_0$,  the validity of  inequality $\|X(t)\|\leq\delta$   follows for $t \in [0,\infty)$. 

We will prove the assertion  by contradiction.  Assume that at the moment $t^*, t^*\ge 0$,  the trajectory of the solution of Cauchy problem (\ref{19}) with condition 
\begin{equation}
X(0)=X_0, \|X_0\|=\delta, 
\label{22}
\end{equation}
leaves a ball  $B(0,\delta)$. 

The solution of the Cauchy problems  (\ref{19}), (\ref{22}) for $t \in [t^*,\infty)$ can be written in the form  
\begin{equation}
\begin{array}{lll}
X(t)=\exp\{A(t^*)(t-t^*)\}X(t^*)+
\\
+\int_{t^*}^t\exp\{A(t^*)(t-s)\}(A(s)-A(t^*))X(s)ds+   \\
+\int_{t^*}^t\exp\{A(t^*)(t-s)\}\left(\int_0^sB(s,\tau)X(\tau)d\tau\right)ds.
\end{array}
\label{23}
\end{equation}

Proceeding to the norm we have  
\[
\begin{array}{lll}
\|X(t)\|\leq \exp\{\Lambda(A(t^*))(t-t^*)\}\|X(t^*)\|+
\\
+\int_{t^*}^t\exp\{\Lambda(A(t^*))(t-s)\}\|A(s)-A(t)\| \|X(s)\|ds+
\\
+\int_{t^*}^t\exp\{\Lambda(A(t^*))(t-s)\}\left(\int_0^s\|B(s,\tau)\| \|X(\tau)\|d\tau\right)ds.
\end{array}
\]

Repeating the arguments used to prove  Theorem \ref{Theorem1}  we can show that there is an interval $[t^*,t^{**}]$ such that for $t \in [t^*,t^{**}]$  the previous inequality can be strengthened 
\begin{equation}
\begin{array}{lll}
\|X(t)\|\leq \exp\{\Lambda(A(t^*))(t-t^*)\}\|X(t^*)\|+
\\
+\varepsilon_1\int_{t^*}^t\exp\{\Lambda(A(t^*))(t-s)\}\|X(s)\|ds+
\\
+\int_{t^*}^t\exp\{\Lambda(A(t^*))(t-s)\}\left(\int_0^s\|B(s,\tau)\|d\tau\right)\|X(s)\|ds=
\\
=\exp\{\Lambda(A(t^*))(t-t^*)\}\|X(t^*)\|+
\\
+\int_{t^*}^t\exp\{\Lambda(A(t^*))(t-s)\}g(s)\|X(s)\|ds, 
\end{array}
\label{24}
\end{equation}
where $g(s)=\varepsilon_1+\int_0^s\|B(s,\tau)\|d\tau$, $\varepsilon_1=\max_{s \in [t^*,t^{**}]} 
\|A(s)-A(t^*)\|$.

Introduce a function
\[\varphi(t)=\exp\{-\Lambda(A(t^*))t\}\|X(t)\|.
\]

The inequality  (\ref{24}) can be written in the form  
\begin{equation}
\varphi(t)\leq\varphi(t^*)+\int_{t^*}^tg(s)\varphi(s)ds. 
\label{25}
\end{equation}

Applying the Gronwall-Bellman inequality to (\ref{25}) we get
\[\varphi(t)\leq\varphi(t^*)  \exp\left\{\int_{t^*}^tg(s)ds\right\}.\]
So,
\[\|X(t)\|\leq\exp\left\{\Lambda(A(t^*))(t-t^*)+\int_{t^*}^tg(s)ds\right\}\|X(t^*)\|.\]

Hence, if for  $t \in [t^*,t^{**}]$ the following inequality is satisfied
\[\Lambda(A(t^*))(t-t^*)+\int_{t^*}^tg(s)ds<0,
\]
then the trajectory of the Cauchy problem  (\ref{17})  solution does not leave the ball $B(0,\delta)$. 

Thus, we have a contradiction which implies that when the inequality 
\[\Lambda(A(t))+\frac{1}{t-t^*}\int_{t^*}^{t} g(s)ds<0, \ \ \ t\in [0,\infty) \]
holds, stability of a trivial solution of the system of equations  (\ref{17}) is verified. 

It is easy to see that this inequality follows from the inequality (\ref{21}) when 
$ |t^{**}-t^* |$ are small enough. 

We note, when the inequality (\ref{21}) is fulfilled, the norm of the solution of equation (\ref{17}) $\|X(t)\|$ is monotonically decreasing.
Indeed, if the function $ \|X(t)\|$ at some point $\hat t$ yields a local minimum, then it is sufficient to repeat the foregoing arguments in order to demonstrate monotonic behavior.

We have, thus, proved the following theorem.
\begin{theorem}Let the conditions be fulfilled 
\[ 
\Lambda(A(t))+ \int_0^t \| B(t,s)\|ds < 0.
\]
Then the trivial solution of the system of equation (\ref{19}) is stable.  
\label{Theorem3}
\end{theorem}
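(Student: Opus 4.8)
The plan is to argue by contradiction in the same spirit as the proof of Theorem~\ref{Theorem1}, reducing the integro-differential system to a frozen-coefficient Cauchy problem and controlling the growth of $\|X(t)\|$ through the logarithmic norm and the Gronwall--Bellman inequality. Given $\varepsilon>0$, I would set $\delta(\varepsilon)=\varepsilon$ and suppose, for contradiction, that a trajectory starting from some $X_0$ with $\|X_0\|=\delta$ escapes the ball $B(0,\delta)$. Let $t^{*}\ge 0$ be the first instant at which $\|X(t^{*})\|=\delta$ while $\|X(t)\|>\delta$ on a right neighbourhood, so that $\|X(t)\|$ increases on some interval $[t^{*},t^{**}]$.

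On $[t^{*},\infty)$ I would represent the solution by the variation-of-parameters formula (\ref{23}), in which $A$ is frozen at $t^{*}$ and the terms $(A(s)-A(t^{*}))X(s)$ together with the Volterra convolution are treated as inhomogeneities. Passing to norms and using the defining bound $\|\exp\{A(t^{*})\tau\}\|\le \exp\{\Lambda(A(t^{*}))\tau\}$ for the logarithmic norm, I obtain an estimate of the form (\ref{24}). The monotonicity of $\|X(t)\|$ on $[t^{*},t^{**}]$ lets me replace $\|X(\tau)\|$ by $\|X(s)\|$ inside the double integral, collapsing the kernel term into the scalar weight $g(s)=\varepsilon_1+\int_0^s\|B(s,\tau)\|d\tau$, where $\varepsilon_1=\max_{s\in[t^{*},t^{**}]}\|A(s)-A(t^{*})\|$. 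Introducing $\varphi(t)=\exp\{-\Lambda(A(t^{*}))t\}\|X(t)\|$ turns the estimate into the linear integral inequality (\ref{25}), and the Gronwall--Bellman inequality then yields $\|X(t)\|\le \exp\{\Lambda(A(t^{*}))(t-t^{*})+\int_{t^{*}}^t g(s)\,ds\}\|X(t^{*})\|$. A contradiction therefore appears as soon as the exponent is negative on $(t^{*},t^{**}]$, i.e. whenever $\Lambda(A(t^{*}))+\frac{1}{t-t^{*}}\int_{t^{*}}^t g(s)\,ds<0$.

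The step I expect to be the main obstacle is the passage from the pointwise hypothesis $\Lambda(A(t))+\int_0^t\|B(t,s)\|\,ds<0$ to this averaged inequality over the short interval $[t^{*},t^{**}]$. Here I would use the continuity of $A$: as $t^{**}\downarrow t^{*}$ the constant $\varepsilon_1\to 0$, and by continuity of $s\mapsto \int_0^s\|B(s,\tau)\|\,d\tau$ the average $\frac{1}{t-t^{*}}\int_{t^{*}}^t g(s)\,ds$ tends to $\int_0^{t^{*}}\|B(t^{*},\tau)\|\,d\tau$. Since the hypothesis makes $\Lambda(A(t^{*}))+\int_0^{t^{*}}\|B(t^{*},\tau)\|\,d\tau$ strictly negative, the whole exponent is negative for $t$ sufficiently close to $t^{*}$, forcing $\|X(t)\|<\|X(t^{*})\|=\delta$ and contradicting the escape from $B(0,\delta)$.

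This contradiction establishes $\|X(t)\|\le\delta(\varepsilon)=\varepsilon$ for all $t\in[0,\infty)$, which is exactly Definition 5, and the same local argument applied at any interior local minimum of $\|X(t)\|$ shows that the norm is in fact monotonically non-increasing. Care is needed to treat $t^{*}=0$ (the perturbed-initial-value case, where the representation is taken from the origin as in (\ref{v24})) and to ensure the interval $[t^{*},t^{**}]$ of increase genuinely exists; both follow from continuity of the solution and of the coefficients, exactly as in the proofs of Theorems~\ref{Theorem1} and \ref{Theorem21}.
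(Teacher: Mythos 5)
Your proposal is correct and follows essentially the same route as the paper's own argument: the frozen-coefficient variation-of-parameters representation (\ref{23}), the logarithmic-norm estimate, the replacement of $\|X(\tau)\|$ by $\|X(s)\|$ on the interval of increase, the substitution $\varphi(t)=\exp\{-\Lambda(A(t^*))t\}\|X(t)\|$, and the Gronwall--Bellman inequality leading to a contradiction. If anything, your explicit continuity argument for passing from the pointwise hypothesis to the averaged inequality over $[t^{*},t^{**}]$ fills in a step the paper dismisses with ``it is easy to see.''
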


\section{Numerical illustrations}
\begin{figure}[h!]
 \centering
 \begin{tabular}{cc}
   \begin{overpic}[width=0.5\linewidth]{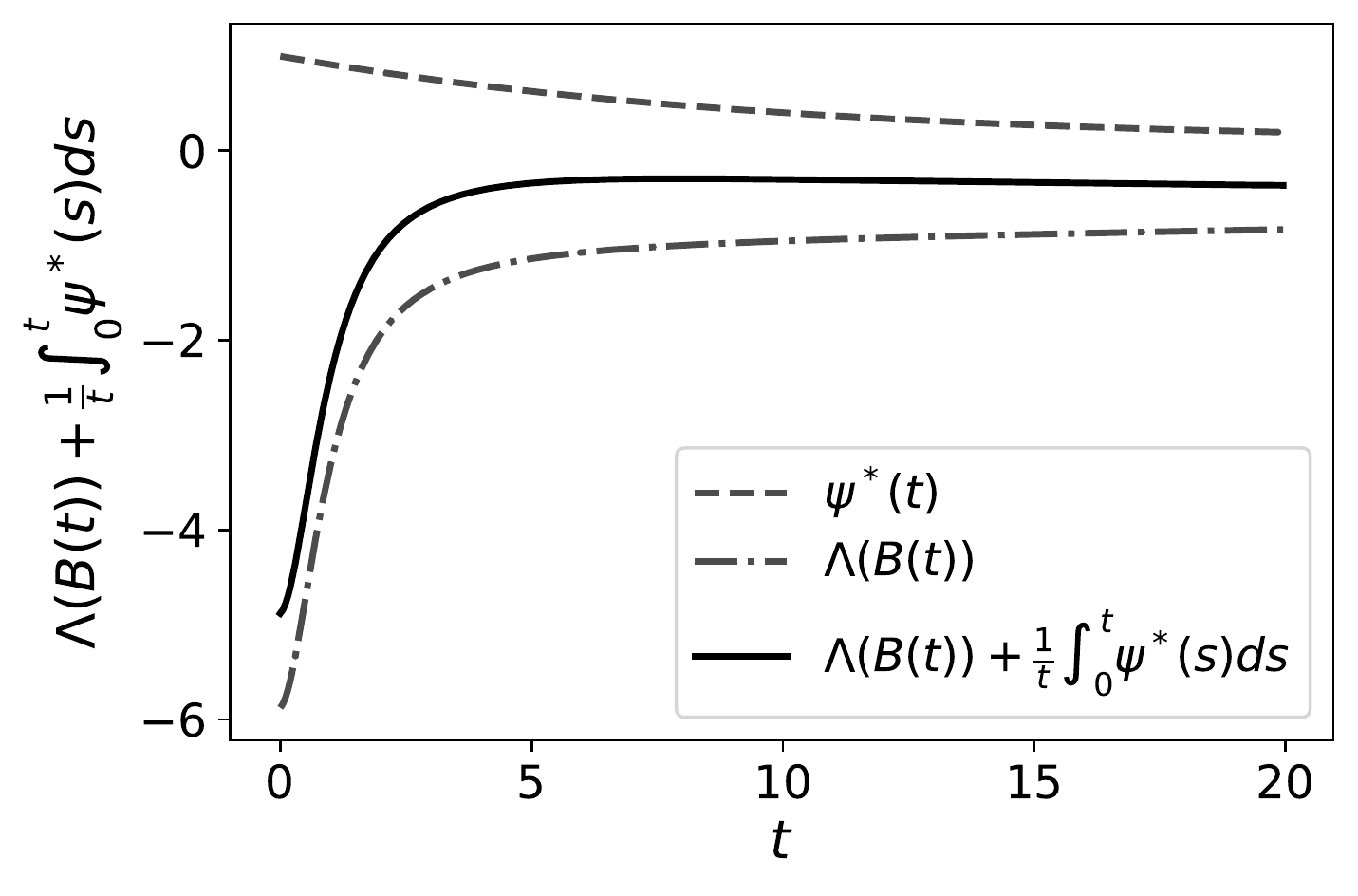}
   \put(22,45){a)}
   \end{overpic}
 &
   \begin{overpic}[width=0.5\linewidth]{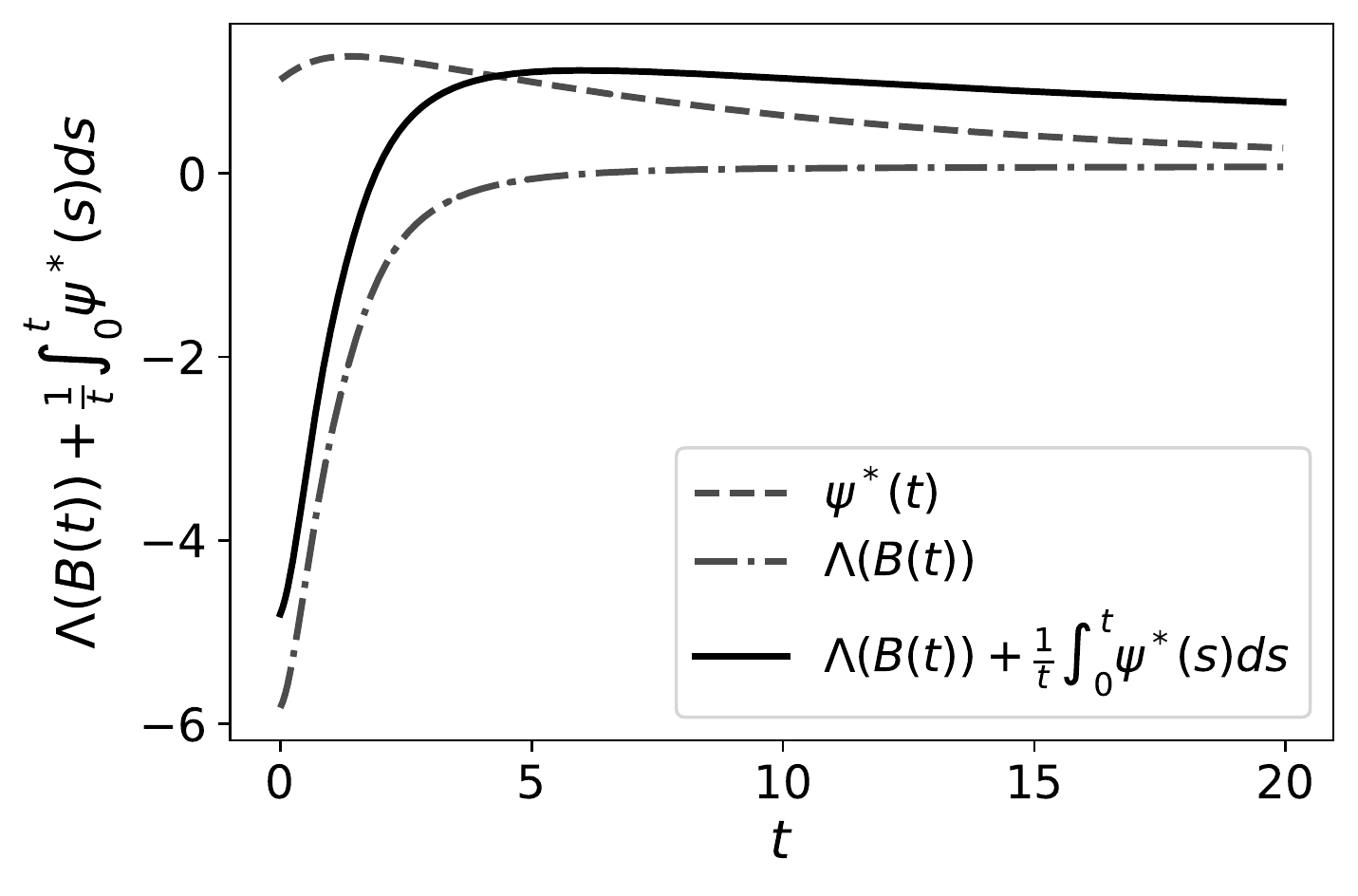}
    \put(20,45){c)}
   \end{overpic}
 \\
   \begin{overpic}[width=0.5\linewidth]{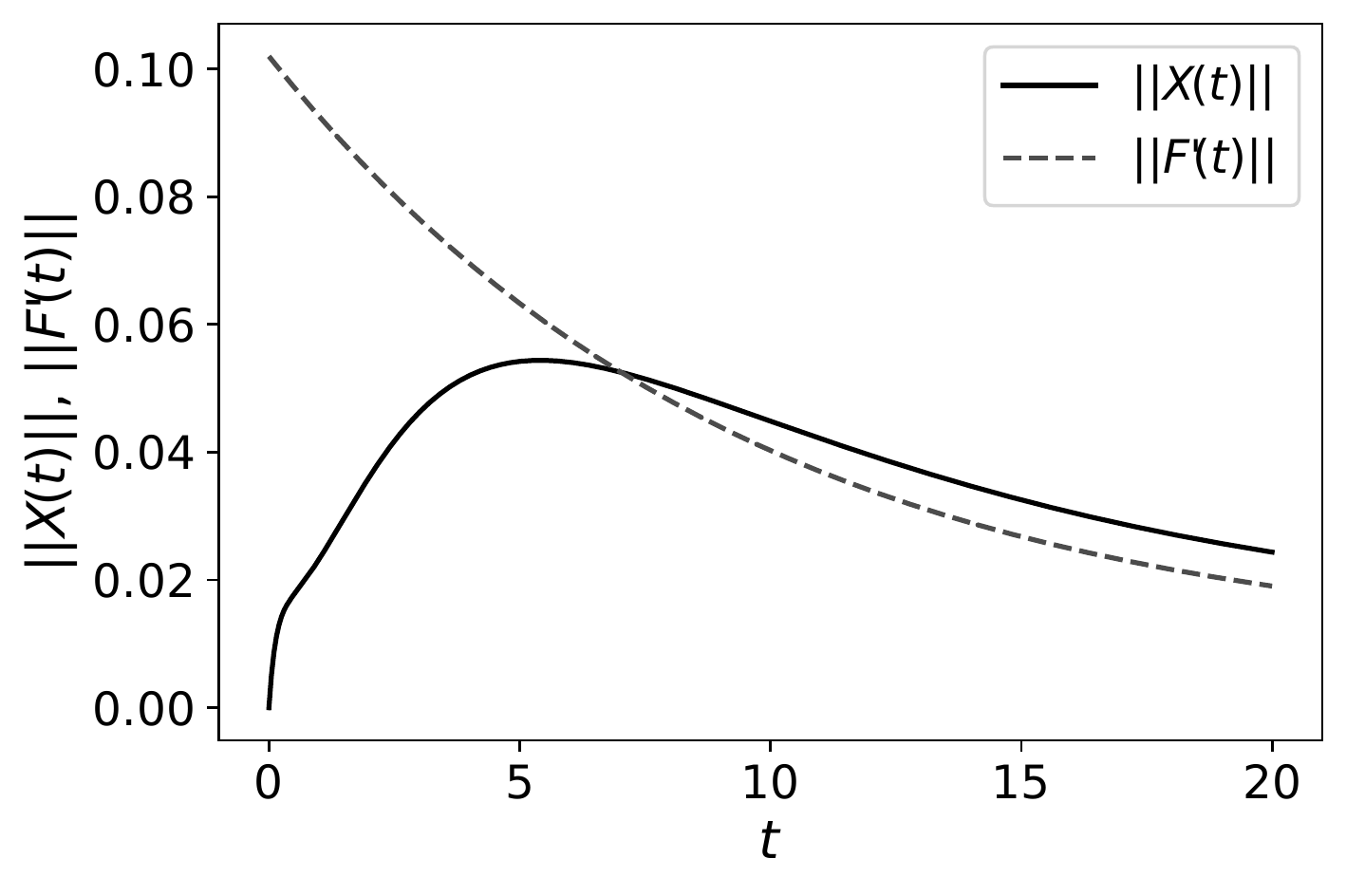}
    \put(20,45){b)}
   \end{overpic}
 &
   \begin{overpic}[width=0.5\linewidth]{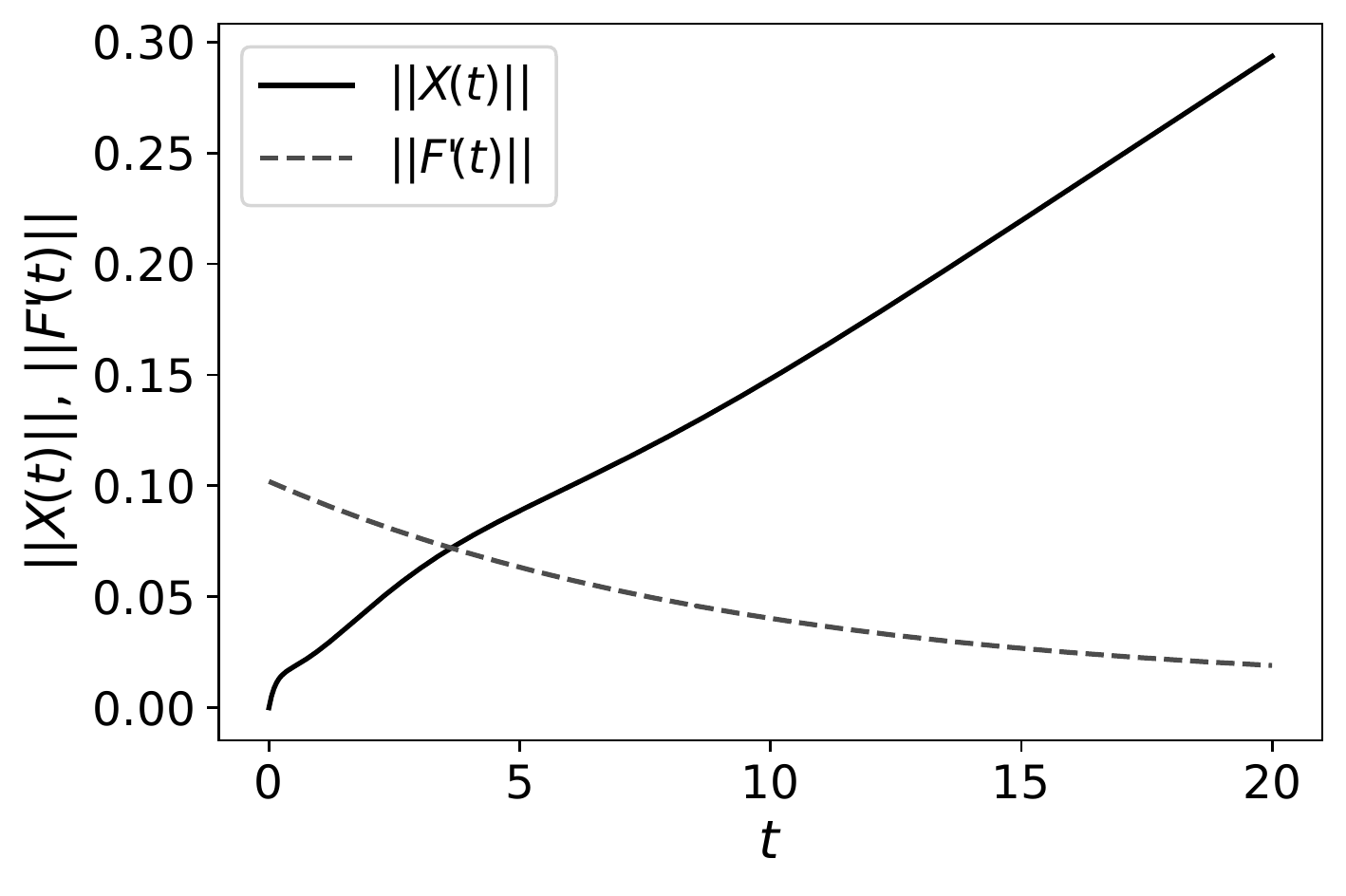}
    \put(20,45){d)}
   \end{overpic}
 \end{tabular}
 \caption{Stability of the solution for equation (\ref{7}) with respect to inhomogeneous term. 
 a),b) -- the conditions of Theorem \ref{Theorem1} are satisfied for the entire range of $t$.
 a) -- the stability indicating function (see Theorem \ref{Theorem1}) and its components; b) -- the norms of the solution and the inhomogeneus term, the norm of the solution does not exceed the supremum of the inhomogeneous term norm;
 c),d) -- the conditions of Theorem \ref{Theorem1} are satisfied only for a small subrange of $t$.
 c) -- the stability indicating function and its components; d) -- the norms of the solution and the inhomogeneus term, the norm of the solution exceeds the supremum of the inhomogeneous term norm when the conditions of Theorem \ref{Theorem1} break.
 \label{fig:th1} }
\end{figure}
In this section we illustrate theorems 1 and 3, i.e. stability of the solution with respect to the inhomogeneous term (Theorem \ref{Theorem1}) and stability of the trivial solution with respect to perturbations of the initial conditions (Theorem \ref{Theorem3}).

First, we solve equation (\ref{5}) by solving the associated Cauchy problem (\ref{7}). The operator and the inhomogeneous term read
\begin{equation}
  B(t,\tau)=\left(
            \begin{array}{cc}
             
               \alpha_{11}  \frac{e^{-\beta_{11}(t+\tau)}}{1+\tau^2}-\gamma e^{-\tau/100} 
               & \alpha_{12}  \frac{2^{-\beta_{12}t}}{1+\tau^3} \\
               \alpha_{21}  \frac{e^{-\beta_{21}(t+\tau^2)}}{1+\tau^4} 
               & \alpha_{22}  \frac{3^{-\beta_{22}\tau}}{1+\tau^2}-\gamma e^{-\tau/100} 
             
            \end{array}
            \right) \ ,
 \label{eq:th1numer}
\end{equation}
\begin{equation}
   F(t)=\left(
        \begin{array}{c}
          e^{-t/10}-1 \\
          2 e^{-t/50}-2
        \end{array}
        \right) \ .
  \label{eq:th1inhom}
\end{equation}
We use two sets of parameters of the operator.
First, we choose the parameters $\alpha_{ik}$ and $\beta_{ik}$ so that the conditions of the Theorem \ref{Theorem1} are satisfied for the whole range of $t\in[0,20]$. We use the following values:

\[
  \alpha_{ik}=\left(
              \begin{array}{cc}
               -5 & 1\\
               \frac{1}{2} & -1
              \end{array}
              \right) \ ,
\]
\[
  \beta_{ik}=\left(
              \begin{array}{cc}
                0,001 & 0,001\\
               0,002 & 0,001
              \end{array}
              \right) 
\]
and $\gamma=1$.
Second,  we modify the parameters $b_{ik}$ and $\gamma$ so that the conditions of the theorem are satisfied only in the beginning of the interval $t\in [0,t^*]$. In this case the following values have been used:
\[
  \beta_{ik}=\left(
              \begin{array}{cc}
                0,1 & 0,1\\
               0,2 & 0,1
              \end{array}
              \right) 
\]
and $\gamma=0$.

We solve equation (\ref{eq:th1numer}) using a standard explicit Euler method with uniform step $\Delta t$ and the trapezoidal rule for integral evaluation. Namely, at each time step $t_i=i \Delta t$
we evaluate the integral operator as
\[
  Y_i\equiv \int_0^{t_i} B'_t(t_i,\tau) X(\tau) d\tau =\Delta t \sum_{k=0}^{i} B'_t(t_i,t_k) X_k 
  -\frac{\Delta t}{2}(B'_t(t_i,t_0) X_0+B'_t(t_i,t_i) X_i)+O(\Delta t^2)
\]
and the trajectory is being updated
\[
  X_{i+1}=X_{i}+\Delta t (B(t_{i},t_{i}) X_{i}+Y_{i}+F'(\frac{t_i+t_{i+1}}{2})) \ .
\]
Here $B'_t(t,\tau)$ stands for the partial derivative $B'_t(t,\tau)=\frac{\partial B(t,\tau)}{\partial t}$ and $F'(t)$ stands for the derivative of the inhomogeneous term (\ref{eq:th1inhom}) (see equation (\ref{7})).
The time step $\Delta t=0.025$ is sufficient to calculate the solution converged within the resolution of the figure for $t \in [0,20]$. 

The numerical results are shown in figure~\ref{fig:th1}. When the conditions of Theorem \ref{Theorem1} are satisfied everywhere within the given range, the norm of the solution does not exceed the supremum of the norm of the inhomogeneous term (figure~\ref{fig:th1}a and \ref{fig:th1}b). Contrary, when the conditions of the theorem break  (figure~\ref{fig:th1}c and \ref{fig:th1}d), the norm of the solution keeps growing.

\begin{figure}[h!]
 \centering
    \includegraphics[width=1.0\linewidth]{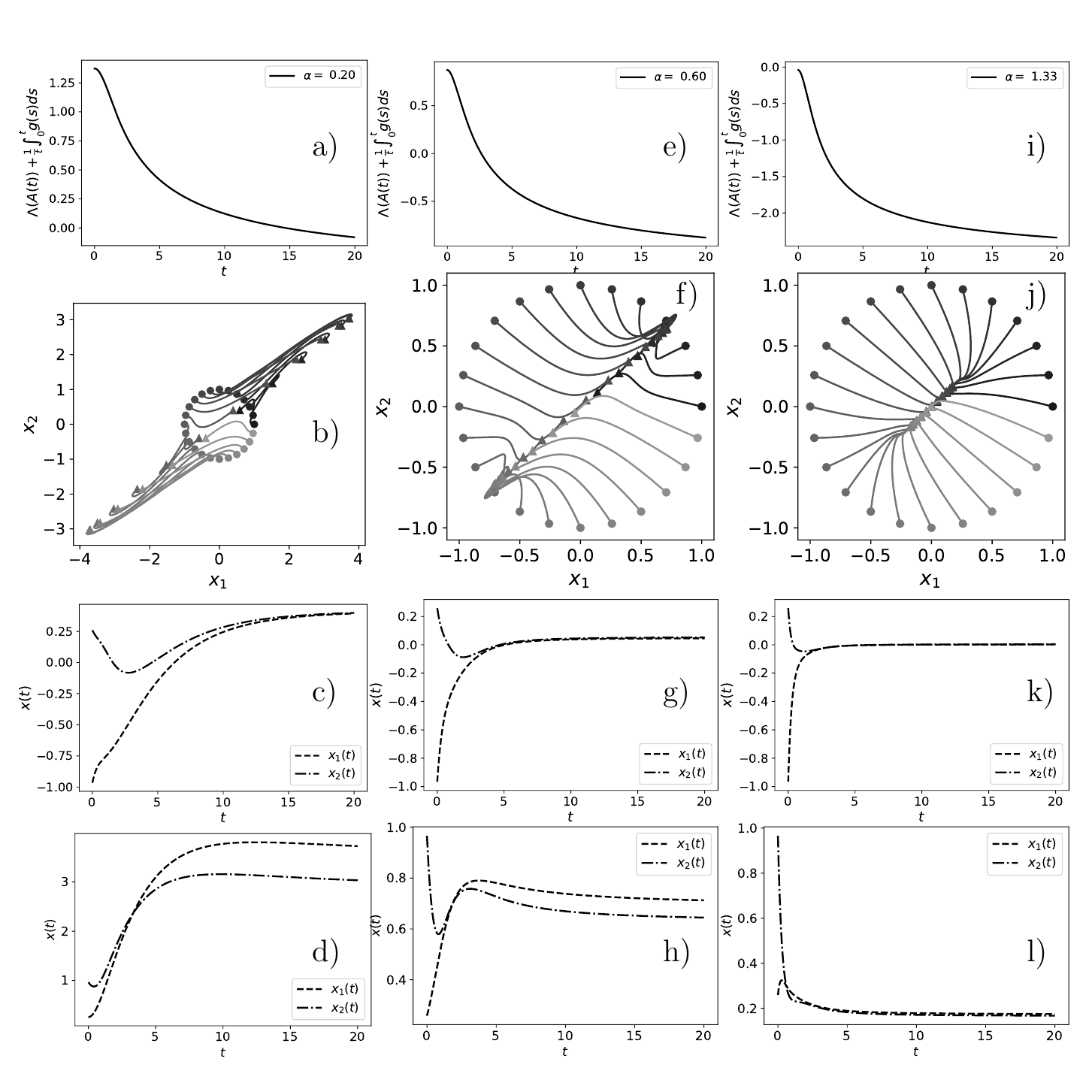}
 \caption{Stability of the trivial solution for equation (\ref{eq:example}). 
 a),e),i) -- the stability indicating function (\ref{21}) for different values of $\alpha$; b),f),j) -- the solutions starting on a unit sphere  for different values of $\alpha$;
 c),g),k) -- the solutions with minimal deviation from the trivial solution for different values of $\alpha$;
 d),h),l) -- the solutions with maximal deviation from the trivial solution for different values of $\alpha$. The values of $\alpha$ are as indicated in subfigures a),e),i): 
 a)-d) $\alpha=0.2$; e)-h) $\alpha=0.6$; i)-l) $\alpha=1.33$.
 \label{fig:fig1} }
\end{figure}
In order to illustrate Theorem \ref{Theorem3} we check for stability of the trivial solution for the following equation
\begin{equation}
   \frac{dX(t)}{dt}=\alpha A(t)X(t)+\int_0^t H(t,\tau) X(\tau) d\tau \ .
\label{eq:example}
\end{equation}
Here $a(t)$ is a linear operator defined by its matrix
\[
  A(t)=\left(
       \begin{array}{cc}
         -2 & \frac{1}{1+t^2}\\
          \frac{1}{2+t^2} & -2
       \end{array}
  \right) \ ,
\]
and the integral kernel is taken as
\[
  H(t,\tau)=\left(
       \begin{array}{cc}
         \frac{1}{1+t^2+\tau^2} & \frac{1}{1+\tau^3} \\
          \frac{t}{2+t^3+\tau^4} & \frac{1}{1+\tau^4} 
       \end{array}
  \right) \ .
\]
Here parameter $\alpha$ in the equation (\ref{eq:example}) regulates the balance between the negative logarithmic norm of the operator $A$ and the norm of the integral operator. We shall discuss it in more details later on.

As the solutions of the equation (\ref{eq:example}) are scale-invariant, it is sufficient to track the trajectories that originate from the unit circle. 

We used a numerical scheme  similar -- up to the notation and the absence of the inhomogeneous term -- to the one employed in the illustration for Theorem \ref{Theorem1}. 

We use parameter $\alpha$ in equation (\ref{eq:example}) to illustrate three different regimes. For small $\alpha$ the stability indicating function in Theorem \ref{Theorem3}  -- see also equation (\ref{21}) -- is dominated by the integral operator and is strictly positive in the interval (see figure~\ref{fig:fig1}a). In this case the stability of the trivial solution is not guaranteed. The solutions originating from the unit circle are shown in figure~\ref{fig:fig1}b. We see that the most of the trajectories leave the unit circle rapidly, while only a few stay within (figure~\ref{fig:fig1}c).  For an intermediate value of $\alpha$ the stability indicating function changes its sign (figure~\ref{fig:fig1}e), and the stability is still not guaranteed. In our case the solution stays roughly within the range of the initial condition, not approaching the trivial solution(figure~\ref{fig:fig1}f). For $\alpha=1.33$ the stability indicating function  is strictly negative (figure~\ref{fig:fig1}i),
which warrants the stability of the trivial solution. Indeed, all the trajectories end closer to the origin than they start (figure~\ref{fig:fig1}j), which we also see tracking the solutions of the minimal and the maximal deviation from the trivial solutions (figure~\ref{fig:fig1}k,l). The both trajectories approach some small constants asymptotically. We should emphasize, that even though the trivial solution is not an attractor for the equation (\ref{eq:example}), it is still a stable solution, as all the solutions starting in its vicinity do stay there.


\section{Conclusions}

In this paper we have proposed a method to obtain sufficient conditions for stability  of solutions of Volterra linear integral equation systems and systems of
Volterra linear integro-differential equations.  The method is based on computing logarithmic norms of matrices consisting of coefficients of equations and derivatives of their kernels. 
This method can also be used to obtain sufficient conditions for stability of solutions of systems of other Volterra integral equations, besides (\ref{1}) or (\ref{19}).

The authors are considering the following directions for future research: obtaining sufficient stability conditions for nonlinear Volterra equations as well as the  equations with fractional derivatives and integrals. We are also going to apply our results to studying of immune response processes in living organisms modeled by the systems of Volterra integro-differential equations.


\vspace{6pt} 



\end{document}